\newdimen\symskip
\newdimen\defskip
\newdimen\parind
\newdimen\leftmarge
\newdimen\theoremshape
\newcommand*{\clei}{\nobreak\hskip\z@skip}
\renewcommand{\:}{\textup{:}}
\DeclareRobustCommand*{\д}{\clei\hbox{-}\clei}
\newcommand{\no}{}
\renewcommand{\@listI}{\settowidth\labelwidth{\labheadi{\no}}\listipar{\parind}{\labelwidth}}
\newcommand{\listivpar}{\topsep\defskip\partopsep0pt\parsep-\parskip\itemsep0.5\topsep}
\newcommand{\listipar}[2]{\rightmargin0pt\leftmargin#1\labelsep#1\advance\labelsep-#2\itemindent0pt\listivpar}
\renewcommand{\@listii}{\settowidth\labelwidth{\labheadii{\@roman{\no}}}\listiipar{\parind}{\labelwidth}}
\newcommand{\listiivpar}{\topsep0.5\defskip\partopsep0pt\parsep-\parskip\itemsep0.5\topsep}
\newcommand{\listiipar}[2]{\rightmargin0pt\leftmargin#1\labelsep#1\advance\labelsep-#2\itemindent0pt\listiivpar}
\def\thempfn{\ifcase\value{footnote}1\or *\or **\or ***\else\@ctrerr\fi}
\renewcommand\footnoterule{%
  \kern-3\p@
  \hrule\@width1in
  \kern2.6\p@}
\renewcommand{\@biblabel}[1]{[#1]}
\renewenvironment{thebibliography}[1]
     {\renewcommand{\refname}{References}%
      \renewcommand{\No}{}%
      \section*{\refname}%
      \@mkboth{\MakeUppercase\refname}{\MakeUppercase\refname}%
      \list{\@biblabel{\@arabic\c@enumiv}}%
           {\itemsep\baselineskip
            \leftmargin\parind
            \settowidth\labelwidth{\@biblabel{#1}}%
            \labelsep\parind\advance\labelsep-\labelwidth
            \@openbib@code
            \usecounter{enumiv}%
            \let\p@enumiv\@empty
            \renewcommand\theenumiv{\@arabic\c@enumiv}}%
      \sloppy
      \clubpenalty4000
      \@clubpenalty\clubpenalty
      \widowpenalty4000%
      \sfcode`\.\@m}
     {\def\@noitemerr
       {\@latex@warning{Empty `thebibliography' environment}}%
      \endlist}
\def\@maketitle{%
  \newpage
  \vskip0.5em%
  UDK \udk%
  \vskip0.5em%
  MSC \msc%
  \vskip1em%
  \begin{center}\bf%
  \let\footnote\thanks%
   {\Large\@author\par}%
   \vskip1.5em%
   {\LARGE\@title\par}%
   \vskip1em%
   {\large\@date}%
  \end{center}%
  \par
  \vskip1.5em}
\def\@title{\@latex@warning@no@line{No \noexpand\title given}}
\renewcommand\sectionmark[1]{%
 \markright{%
  \ifnum \c@secnumdepth >\z@
   \thesection. \ %
  \fi
 #1}}%
\renewcommand{\section}{\@startsection{section}{1}{0pt}%
{5.5ex plus .5ex minus .2ex}{1.5ex plus .3ex}%
{\center\normalfont\Large\bfseries\sffamily\bom}}
\renewcommand{\subsection}{\@startsection{subsection}{2}{0pt}%
{4.5ex plus .4ex minus .2ex}{0.75ex plus .2ex}%
{\center\normalfont\large\bfseries\sffamily\bom}}
\renewcommand{\subsubsection}{\@startsection{subsubsection}{3}{0pt}%
{2.5ex plus .5ex minus .2ex}{1ex plus .2ex}%
{\center\normalfont\bfseries\sffamily\bom}}
\newcommand{\Ss}{\textup{\S\,}}
\def\@postskip@{\hskip.5em\relax}
\def\postsection{.\@postskip@}
\def\postsubsection{.\@postskip@}
\def\postsubsubsection{.\@postskip@}
\def\postparagraph{.\@postskip@}
\def\postsubparagraph{.\@postskip@}
\def\@seccntformat#1{\csname pre#1\endcsname\csname the#1\endcsname\csname post#1\endcsname}
\renewcommand{\thesection}{\textup{\arabic{section}}}
\newcommand{\parr}{\par\addvspace{\defskip}}
\newcommand{\theo}[2]{\newtheorem{#1}{#2}[section]}
\newcommand{\deff}[2]{\newenvironment{#1}{\parr\textbf{#2.}}{\parr}}
\def\@begintheorem#1#2[#3]{%
  \deferred@thm@head{\the\thm@headfont \thm@indent
    \@ifempty{#1}{\let\thmname\@gobble}{\let\thmname\@iden}%
    \@ifempty{#2}{\let\thmnumber\@gobble}{\let\thmnumber\@iden}%
    \@ifempty{#3}{\let\thmnote\@gobble}{\let\thmnote\@iden}%
    \thm@notefont{\bfseries\upshape}%
    \indent%
    \thm@swap\swappedhead\thmhead{#1}{#2}{#3}%
    \the\thm@headpunct
    \thmheadnl 
    \hskip\thm@headsep
  }%
  \ignorespaces}
\renewenvironment{proof}{\setcounter{cas}{0}\parr\pushQED{\qed}\normalfont$\square\quad$}{\setcounter{cas}{0}\popQED\@endpefalse\parr}
\newcommand{\labheadi}[1]{\textup{#1)}}
\newcommand{\labheadii}[1]{\textup{(#1)}}
\newenvironment{nums}[1]{\renewcommand{\no}{#1}\begin{enumerate}}{\end{enumerate}}
\newcommand{\eqn}[1]{\begin{equation}#1\end{equation}}
\newcommand{\equ}[1]{\begin{equation*}#1\end{equation*}}
\newcommand{\eqac}[1]{\equ{\begin{array}{c}#1\end{array}}}
\def\LT@makecaption#1#2#3{%
  \LT@mcol\LT@cols c{\hbox to\z@{\hss\parbox[t]\LTcapwidth{%
    \sbox\@tempboxa{#1{#2. }#3}%
    \ifdim\wd\@tempboxa>\hsize
      #1{#2. }#3%
    \else
      \hbox to\hsize{\hfil\box\@tempboxa\hfil}%
    \fi
    \endgraf\vskip\baselineskip}%
  \hss}}}
\newenvironment{casks}{%
  \matrix@check\casks\env@casks
}{%
  \endarray\right.%
}
\def\env@casks{%
  \let\@ifnextchar\new@ifnextchar
  \left\lbrack
  \def\arraystretch{1.2}%
  \array{@{}l@{\quad}l@{}}%
}
\newcounter{numt}
\newcounter{col}
\newcounter{coll}
\renewcommand{\ge}{\geqslant}
\renewcommand{\le}{\leqslant}
\newcommand{\eqi}{\equiv}
\newcommand{\subs}{\subset}
\newcommand{\wo}{\backslash}
\newcommand{\cln}{\colon}
\newcommand{\hra}{\hookrightarrow}
\newcommand{\wt}{\widetilde}
\newcommand*{\bw}[1]{#1\nobreak\discretionary{}{\hbox{$\mathsurround=0pt #1$}}{}}
\newcommand{\br}[1]{\bigl(#1\bigr)}
\newcommand{\bc}[1]{\bigl\{#1\bigr\}}
\newcommand{\Bbl}{\Bigm\wo}
\newcommand{\mbb}{\mathbb}
\newcommand{\mbf}{\mathbf}
\newcommand{\mcl}{\mathcal}
\newcommand{\R}{\mbb{R}}
\newcommand{\T}{\mbb{T}}
\newcommand{\F}{\mbb{F}}
\newcommand{\Cbb}{\mbb{C}}
\newcommand{\Pc}{\mcl{P}}
\newcommand{\ga}{\gamma}
\newcommand{\la}{\lambda}
\newcommand{\ta}{\theta}
\newcommand{\ph}{\varphi}
\DeclareMathOperator{\Hom}{Hom}
\DeclareMathOperator{\End}{End}
\DeclareMathOperator{\rk}{rk}
\newcommand{\GL}{\mbf{GL}}
\newcommand{\SL}{\mbf{SL}}
\newcommand{\Or}{\mbf{O}}
\newcommand{\SO}{\mbf{SO}}
\newcommand{\bom}{\boldmath}
\begin{document}

\author{O.\,G.\?Styrt}
\title{The existence\\
of a~semialgebraic continuous\\
factorization map\\
for some compact linear groups}
\date{}
\newcommand{\udk}{512.815.1+512.815.6+512.816.1+512.816.2}
\newcommand{\msc}{14L24+22C05}

\maketitle

{\leftskip\parind\rightskip\parind
It is proved that each of compact linear groups of one special type admits a~semialgebraic continuous factorization map onto a~real vector space.

\smallskip

\textbf{Key words\:} Lie group, factorization map of an action, semialgebraic map.\par}

\section{Introduction}\label{introd}

In this paper, we prove that each of compact linear groups of one certain type admits a~semialgebraic continuous factorization map onto a~real vector
space. This problem arose from the question when the topological quotient of a~compact linear group is homeomorphic to a~vector space that was researched
in \cite{MAMich,My1,My2,My3,My4}.

For convenience, denote by~$\Pc_{\R}$ (resp. by~$\Pc_{\Cbb}$) the class of all finite-dimensional Euclidean (resp. Hermitian) spaces. Let~$\F$ be one of
the fields $\R$ and~$\Cbb$. We will write $\F^k$ ($k\ge0$) for the space $\F^k\in\Pc_{\F}$ whose standard basis is orthonormal. For any spaces
$W_1,W_2\in\Pc_{\F}$, we will write $W_1\oplus W_2$ for their \textit{orthogonal} direct sum $W_1\oplus W_2\in\Pc_{\F}$ (unless they are linearly
independent subspaces of the same space). For any space $W\in\Pc_{\F}$, denote by $S(W)$ the real subspace
$\bc{A\bw\in\End_{\F}(W)\cln A=A^*}\bw\subs\End_{\F}(W)$, by $\Or(W)$ the compact Lie group $\bc{C\bw\in\GL_{\F}(W)\cln CC^*\bw=E}\bw\subs\GL_{\F}(W)$,
and by $\SO(W)$ its compact subgroup $\Or(W)\cap\SL_{\F}(W)$.

The following theorem is the main result of the paper and will be proved in \Ss\ref{promain}.

\begin{theorem}\label{main} Consider a~number $k\in\{1,2\}$, a~space $W\in\Pc_{\F}$ such that $\dim_{\F}W\ge2-k$, and the representation
\eqn{\label{repm}
\Or(W)\cln\br{S(W)/(\R E)}\oplus\Hom_{\F}(\F^k,W),\,C\cln(A+\R E,B)\to(CAC^{-1}+\R E,CB).}
\begin{nums}{-1}
\item If $k=1$, then there exist a~real vector space~$V$ and, in terms of the representation
$\Or(W)\cln V\oplus\F,\,C\cln(v,\la)\to(v,\det_{\F}C\cdot\la)$, a~semialgebraic continuous $\br{\Or(W)}$\д equivariant surjective map
$\br{S(W)/(\R E)}\oplus\Hom_{\F}(\F^k,W)\to V\oplus\F$ whose fibres coincide with the $\br{\SO(W)}$\д orbits.
\item If $k=2$, then there exist a~real vector space~$V$ and a~semialgebraic continuous surjective map $\br{S(W)/(\R E)}\oplus\Hom_{\F}(\F^k,W)\to V$
whose fibres coincide with the $\br{\Or(W)}$\д orbits.
\end{nums}
\end{theorem}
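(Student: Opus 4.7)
The strategy is to build a polynomial (hence semialgebraic and continuous) $\Or(W)$-equivariant map into the target vector space using classical invariant theory together with an explicit surjectivity argument. First, I would exploit the fact that every self-adjoint operator admits an orthonormal eigenbasis to bring each point into a normal form with $A=\diag(\la_1,\ldots,\la_n)$ and $\sum\la_i=0$ (which picks a representative of the class of $A$ in $S(W)/\R E$). The residual stabiliser is then the product $\prod\Or(W_{\la_i})$ over the distinct eigenspaces, and its action on $B\in\Hom_\F(\F^k,W)$ in this frame is explicit and block-diagonal.

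For the $\Or(W)$-invariants of $(A+\R E,B)$, I would invoke classical invariant theory for $\Or$ and $\Un$: they are generated by the scalars $\tr_\F(A^j)$ and by the entries of the self-adjoint Gram-type matrices $B^\ast A^j B\in S(\F^k)$, with $\tr A=0$ imposed to factor out $\R E$. Collecting an irredundant subfamily gives a polynomial map into a candidate real vector space $V$ whose fibres coincide with the $\Or(W)$-orbits; this settles part~(2) modulo surjectivity. For part~(1), with $k=1$ and $B=w\in W$, I would complement the $\Or(W)$-invariants by the $\det$-covariant
\[
q(A,w)\;=\;\det_\F\bigl[\,w,\,Aw,\,A^2 w,\,\ldots,\,A^{n-1}w\,\bigr]\in\F,
\]
i.e.\ the determinant of the Krylov matrix. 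Under $C\in\Or(W)$ the Krylov matrix becomes $C[w,Aw,\ldots]$, so $q$ transforms by $\det_\F C$ and is $\SO(W)$-invariant. Hence the pair $(V\text{-component},\,q)\in V\oplus\F$ has the prescribed $\Or(W)$-equivariance. For a generic (cyclic) $w$ the Krylov columns span $W$ and so already fix the residual $\Or/\SO$ ambiguity; a case analysis on the non-generic loci (where the multiplicity pattern of the eigenvalues of $A$ degenerates) then yields that the fibres are precisely the $\SO(W)$-orbits.

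The main obstacle will be surjectivity onto the full vector space $V$ (resp.\ $V\oplus\F$). Semialgebraic images are typically proper semialgebraic subsets, so one must verify that the chosen combinations of traces and Gram-type entries really sweep out everything. I would handle this by stratifying the source according to the multiplicity pattern of the eigenvalues of $A$, producing on the generic (simple-spectrum) stratum an explicit preimage in diagonal form for each target point, and then extending over the boundary strata by continuity and an induction on dimension, invoking the author's earlier constructions in~\cite{My1}--\cite{My4}. Checking that the resulting polynomial image is already a linear subspace---or, failing that, re-packaging the generators via a semialgebraic homeomorphism so that it becomes one---is where the bulk of the technical work is likely to reside.
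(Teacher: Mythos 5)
There is a genuine gap, and it is exactly where you locate ``the bulk of the technical work.'' Your plan reduces the theorem to itself. The invariant-theoretic map you construct (the traces $\tr_{\F}(A^j)$, the Gram-type entries of $B^*A^jB$, and the Krylov covariant $q$) has as its image a \emph{proper closed semialgebraic subset} of the ambient coordinate space, typically a cone cut out by rank and positivity conditions, not a vector space; and the concluding step --- ``re-packaging the generators via a semialgebraic homeomorphism so that it becomes one'' --- is precisely the assertion to be proved, for which no mechanism is supplied. It is not automatic and is false for quotients of compact linear groups in general, so nothing short of an actual construction will do. Already the smallest instance shows the circularity: for $k=2$, $\F=\R$, $\dim_{\R}W=1$, your map is $B\mapsto B^*B$ on $\Hom_{\R}(\R^2,\R)$, with image the quadric cone $\bc{(x,y,z)\cln xz=y^2,\ x\ge0,\ z\ge0}$ in $S(\R^2)\cong\R^3$; exhibiting a semialgebraic homeomorphism of this cone onto $\R^2$ \emph{is} the theorem in that case. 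Your surjectivity plan is likewise circular: you propose to produce explicit diagonal-form preimages ``for each target point'' of $V$, but $V$ and the way the image sits inside it are not yet identified, so there are no target points to hit. (The parts of your plan that do work are standard: polynomial invariants of a compact group separate orbits, and your Krylov determinant $q(A,w)=\det_{\F}[w,Aw,\sco A^{n-1}w]$ does transform by $\det_{\F}C$, with the degenerate loci handled as you indicate, since whenever $w$ fails to be cyclic the stabiliser of $(A,w)$ surjects onto $\Or(W)/\SO(W)$ under the determinant, so $\SO(W)$- and $\Or(W)$-orbits coincide there.)

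The paper supplies the missing mechanism by a different route: induction on $n=\dim_{\F}W$ through the linking space $\Hom_{\F}(W_1\oplus\F^k,W)$ with $\dim_{\F}W_1=n-1$, which carries commuting actions of $\Or(W)$ and $\Or(W_1)$. Two polar-decomposition quotient maps are played against each other: $\pi_0(Z)=\br{(Z|_{W_1})(Z|_{W_1})^*+\R E,\,Z|_{\F^k}}$, whose fibres are the $\Or(W_1)$-orbits (Corollary~\ref{sre}), and $\psi$, resp. $\wt{\psi}(Z)=\br{\ph(Z^*Z+\R E),\det_{\F}Z}$, whose fibres are the $\Or(W)$-, resp. $\SO(W)$-orbits (Corollaries~\ref{sre} and~\ref{sred}, the latter resting on Statement~\ref{quot} and the identification $M(W)\cong\br{S(W)/(\R E)}\oplus\F$ of Lemma~\ref{bij}). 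The inductive hypothesis collapses the $W_1$-part, and the single non-formal ingredient that makes the quotient a genuine vector space is the explicit Hopf-type map $\ga\cln(\la,\mu)\mapsto\br{|\la|^2-|\mu|^2,\la\mu}$, which realises $\F^2/\T$ as all of $\R\oplus\F$ and absorbs the residual determinant torus $\bc{\det_{\F}C_1\cln C_1\in\Or(W_1)}=\T$. Your proposal contains no analogue of this collapse, so even granting your separation analysis you never reach a vector-space target; to repair the proof you would need either to import this inductive double-fibration argument or to construct, by hand, a semialgebraic homeomorphism from the image of your invariant map onto a linear space --- which is the theorem itself.
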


\section{Auxiliary facts}\label{facts}

Consider a~space $W\in\Pc_{\F}$ and the subsets $S_+(W):=\bc{A\bw\in S(W)\cln A\bw\ge0}\subs S(W)$ and
$S_0(W):=\br{S_+(W)}\Bbl\br{\GL_{\F}(W)}\subs S_+(W)\subs S(W)$.

Set $M(W):=\bc{(A,\la)\in S_+(W)\times\F\cln\det_{\F}A\bw=|\la|^2}\subs S(W)\oplus\F$.

The following statement is well-known.

\begin{stm}\label{quot} The fibres of the map $\pi\cln\End_{\F}(W)\to S(W)\oplus\F,\,X\to(XX^*,\det_{\F}X)$ coincide with the orbits of the action
\eqn{\label{repr}
\SO(W)\cln\End_{\F}(W),\,C\cln X\to XC^{-1},}
and $\pi\br{\End_{\F}(W)}=M(W)\subs S(W)\oplus\F$.
\end{stm}

\begin{lemma}\label{bij} The map $M(W)\to\br{S(W)/(\R E)}\oplus\F,\,(A,\la)\to(A+\R E,\la)$ is a~bijection.
\end{lemma}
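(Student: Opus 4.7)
The second coordinate is tautologically preserved, so bijectivity of the map reduces to a~one-variable question: for every representative $A\in S(W)$ of a~given coset and every $\la\in\F$ there must exist a~unique $t\in\R$ with $A+tE\in S_+(W)$ and $\det_{\F}(A+tE)=|\la|^2$. The plan is to study the single-variable function $f(t):=\det_{\F}(A+tE)$ and deduce everything from its strict monotonicity on the relevant interval.

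Write $\la_1\le\dots\le\la_n$ for the (real) eigenvalues of~$A$ (where $n:=\dim_{\F}W$). Then $f(t)=\prodl{i=1}{n}(\la_i+t)$ is a~polynomial of degree~$n$, the condition $A+tE\ge0$ is equivalent to $t\ge-\la_1=:t_0$, one has $f(t_0)=0$, and $f(t)\to+\bes$ as $t\to+\bes$. The key step will be to verify that~$f$ is strictly increasing on~$[t_0,+\bes)$: each factor $\la_i+t$ is nonnegative on this interval and all of them are positive on~$(t_0,+\bes)$, so $f'(t)=\sums{j}\prods{i\ne j}(\la_i+t)>0$ there. Hence $f\cln[t_0,+\bes)\to[0,+\bes)$ is a~continuous bijection.

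From this both directions follow painlessly. For surjectivity, given $(A+\R E,\la)\in\br{S(W)/(\R E)}\oplus\F$, the unique~$t$ with $f(t)=|\la|^2$ yields the preimage $(A+tE,\la)\in M(W)$. For injectivity, if $(A_1,\la_1),(A_2,\la_2)\in M(W)$ have the same image, then $\la_1=\la_2$ and $A_2=A_1+tE$ for some $t\in\R$; applying the analysis to $A_1\in S_+(W)$ (for which $t_0\le0$) gives $f(0)=\det_{\F}A_1=|\la_1|^2=\det_{\F}A_2=f(t)$, so $t=0$ by strict monotonicity and $A_1=A_2$. The only nontrivial point is the strict monotonicity of~$f$; the complex case needs no separate treatment, since a~Hermitian operator has real eigenvalues and its $\F$-determinant is their product.
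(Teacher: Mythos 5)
Your argument is correct, but it cannot be compared with the paper's route in the usual sense, because the paper gives no argument at all: its entire proof of Lemma~\ref{bij} is a citation of Lemma~5.1 in~\cite{My2}. What you have produced is a self-contained elementary replacement, and it holds up. The reduction is exactly right: the second coordinate is untouched, so everything hinges on the existence and uniqueness, for a fixed representative $A$ with eigenvalues $\la_1\le\ldots\le\la_n$, of a shift $t$ with $A+tE\ge0$ and $\det_{\F}(A+tE)=|\la|^2$; the admissible shifts form the ray $[-\la_1,+\infty)$, and $f(t)=\prodl{i=1}{n}(\la_i+t)$ vanishes at the left endpoint, tends to $+\infty$, and has $f'(t)=\sums{j}\prods{i\ne j}(\la_i+t)>0$ on the open ray, so $f$ maps the closed ray bijectively onto $[0,+\infty)$. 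Existence of the shift gives surjectivity, uniqueness gives injectivity, and your observation that $t_0\le0$ once the representative itself lies in $S_+(W)$ correctly closes the injectivity step (so that $0$ and $t$ both lie in the domain where $f$ is injective). Your remark disposing of the complex case is also the right one: a Hermitian operator has real spectrum and $\det_{\Cbb}$ is the product of its eigenvalues. What your version buys is that the paper becomes self-contained at this point instead of leaning on an external lemma; the cost is negligible, since the argument is one-variable calculus.

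Two small repairs are worth making. First, your proof (and indeed the lemma itself, together with Corollary~\ref{bi}) tacitly requires $\dim_{\F}W\ge1$: for $W=0$ one has $\det_{\F}A=1$ as an empty product, $f\eqi1$ is not strictly monotone, and the map of the lemma has image $\{0\}\times\T$ rather than all of $\{0\}\oplus\F$. This is harmless for the paper, since in the induction the lemma is only ever invoked for nonzero~$W$, but you should state the hypothesis. Second, in the injectivity paragraph you write $(A_1,\la_1),(A_2,\la_2)\in M(W)$ after having already used $\la_1\sco\la_n$ for the eigenvalues of~$A$; rename one family (say, call the second components $\mu_1,\mu_2$) to avoid the clash.
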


\begin{proof} See Lemma~5.1 in~\cite{My2}.
\end{proof}

\begin{imp}\label{bi} The map $S_0(W)\to S(W)/(\R E),\,A\to A+\R E$ is a~bijection.
\end{imp}

\begin{imp}\label{sred} The map $\End_{\F}(W)\bw\to\br{S(W)/(\R E)}\bw\oplus\F,\,X\to(XX^*+\R E,\det_{\F}X)$ is surjective, and its fibres coincide with
the orbits of the action~\eqref{repr}.
\end{imp}

\begin{proof} Follows from Statement~\ref{quot} and Lemma~\ref{bij}.
\end{proof}

Now consider arbitrary spaces $W,W_1\in\Pc_{\F}$ over the filed~$\F$.

\begin{stm}\label{quo} The fibres of the map $\pi\cln\Hom_{\F}(W_1,W)\to S(W),\,X\to XX^*$ coincide with the orbits of the action
\eqn{\label{rep}
\Or(W_1)\cln\Hom_{\F}(W_1,W),\,C_1\cln X\to XC_1^{-1},}
and $\pi\br{\Hom_{\F}(W_1,W)}=\bc{A\bw\in S_+(W)\cln\rk_{\F}A\le\dim_{\F}W_1}\subs S(W)$. In particular, in the case $\dim_{\F}W_1\bw=\dim_{\F}W\bw-1$,
we have $\pi\br{\Hom_{\F}(W_1,W)}=S_0(W)\subs S(W)$.
\end{stm}

We omit the proof since it is clear.

\begin{imp}\label{sre} Suppose that $\dim_{\F}W_1\bw=\dim_{\F}W\bw-1$. Then the map $\Hom_{\F}(W_1,W)\bw\to S(W)/(\R E),\,X\bw\to XX^*\bw+\R E$ is
surjective, and its fibres coincide with the orbits of the action~\eqref{rep}.
\end{imp}

\begin{proof} Follows from Statement~\ref{quo} and Corollary~\ref{bi}.
\end{proof}

\section{Proof of the main result}\label{promain}

In this section, we will prove Theorem~\ref{main}.

First of all, let us prove the claim for a~pair $(k,W)$ ($k\in\{1,2\}$, $W\in\Pc_{\F}$, $\dim_{\F}W=2-k$).

If $k=2$ and $W=0$, then the space of the representation~\eqref{repm} is trivial.
If $k=1$ and $\dim_{\F}W=1$, then the representation~\eqref{repm} is isomorphic to the tautological representation of the group $\Or(W)$ and,
consequently, to the representation $\Or(W)\cln\F,\,C\cln\la\to\det_{\F}C\cdot\la$, and the subgroup $\SO(W)\subs\Or(W)$ is trivial.

This completely proves the claim for a~pair $(k,W)$ ($k\in\{1,2\}$, $W\in\Pc_{\F}$, $\dim_{\F}W=2-k$).

Take an arbitrary pair $(k,W)$ ($k\in\{1,2\}$, $W\in\Pc_{\F}$, $n:=\dim_{\F}W>2-k$) and assume that the claim of Theorem~\ref{main} holds for any pair
$(k,W_1)$ ($W_1\in\Pc_{\F}$, $\dim_{\F}W_1=n-1$). We will now prove it for the pair $(k,W)$.

Since $n>2-k\ge0$, there exists a~space $W_1\in\Pc_{\F}$ such that $\dim_{\F}W_1\bw=n\bw-1$. Consider the embedding
$R\cln\Or(W_1)\hra\Or(W_1\oplus\F^k),\,R(C_1)\cln(x_1,x_2)\to(C_1x_1,x_2)$. The representations
\eqn{\label{adj}
\Or(W_1)\cln S(W_1\oplus\F^k),\,C_1\cln A\to R(C_1)AR(C_1^{-1})}
and $\Or(W_1)\cln S(W_1)\oplus\Hom_{\F}(\F^k,W_1)\oplus S(\F^k),\,C_1\cln(A_1,B,A_2)\to(C_1A_1C_1^{-1},C_1B,A_2)$ are isomorphic via the
$\br{\Or(W_1)}$\д equivariant $\R$\д linear isomorphism
\eqac{
\ta\cln S(W_1)\oplus\Hom_{\F}(\F^k,W_1)\oplus S(\F^k)\to S(W_1\oplus\F^k),\\
\ta(A_1,B,A_2)\cln(x_1,x_2)\to(A_1x_1+Bx_2,B^*x_1+A_2x_2).}
Therefore, the representation~\eqref{adj} is isomorphic to the direct sum of some trivial real representation of the group $\Or(W_1)$ and the
representation
\equ{
\Or(W_1)\cln\br{S(W_1)/(\R E)}\oplus\Hom_{\F}(\F^k,W_1),\,C_1\cln(A_1+\R E,B)\to(C_1A_1C_1^{-1}+\R E,C_1B)}
with trivial fixed point subspace. Thus, there exist a~real vector space~$V'$ and an isomorphism
$\ph\cln S(W_1\oplus\F^k)/(\R E)\to V'\oplus\br{S(W_1)/(\R E)}\oplus\Hom_{\F}(\F^k,W_1)$ of the representations
\eqac{
\Or(W_1)\cln S(W_1\oplus\F^k)/(\R E),\,C_1\cln A+\R E\to R(C_1)AR(C_1^{-1})+\R E;\\
\Or(W_1)\cln V'\oplus\br{S(W_1)/(\R E)}\oplus\Hom_{\F}(\F^k,W_1),\\
C_1\cln(v',A_1+\R E,B)\to(v',C_1A_1C_1^{-1}+\R E,C_1B).}
In terms of the representations
\eqac{
\Or(W)\times\Or(W_1)\cln\Hom_{\F}(W_1\oplus\F^k,W),\,(C,C_1)\cln Z\to CZR(C_1^{-1});\\
\Or(W)\times\Or(W_1)\cln\br{S(W)/(\R E)}\oplus\Hom_{\F}(\F^k,W),\\
(C,C_1)\cln(A+\R E,B)\to(CAC^{-1}+\R E,CB);\\
\Or(W)\times\Or(W_1)\cln V'\oplus\br{S(W_1)/(\R E)}\oplus\Hom_{\F}(\F^k,W_1),\\
(C,C_1)\cln(v',A_1+\R E,B)\to(v',C_1A_1C_1^{-1}+\R E,C_1B),}
the maps
\eqac{
\pi_0\cln\Hom_{\F}(W_1\oplus\F^k,W)\to\br{S(W)/(\R E)}\oplus\Hom_{\F}(\F^k,W),\\
Z\to\br{(Z|_{W_1})(Z|_{W_1})^*+\R E,Z|_{\F^k}};\\
\psi\cln\Hom_{\F}(W_1\oplus\F^k,W)\to V'\oplus\br{S(W_1)/(\R E)}\oplus\Hom_{\F}(\F^k,W_1),\,Z\to\ph(Z^*Z+\R E)}
are $\br{\Or(W)\times\Or(W_1)}$\д equivariant. By Corollary~\ref{sre}, the map~$\pi_0$ is surjective, and its fibres coincide with the
$\br{\Or(W_1)}$\д orbits.

\begin{cas} $k=1$.
\end{cas}

Consider the representation
\eqac{
\Or(W)\times\Or(W_1)\cln V'\oplus\br{S(W_1)/(\R E)}\oplus\Hom_{\F}(\F^k,W_1)\oplus\F,\\
(C,C_1)\cln(v',A_1+\R E,B,\mu)\to(v',C_1A_1C_1^{-1}+\R E,C_1B,\det_{\F}C\cdot\det_{\F}C_1^{-1}\cdot\mu).}
Since $\dim_{\F}W_1\bw=n\bw-1$, there exist a~real vector space~$V_1$ and, in terms of the representation
$\Or(W)\times\Or(W_1)\cln V'\oplus V_1\oplus\F^2,\,(C,C_1)\cln(v',v_1,\la,\mu)\to(v',v_1,\det_{\F}C_1\cdot\la,\det_{\F}C\cdot\det_{\F}C_1^{-1}\cdot\mu)$,
a~semialgebraic continuous $\br{\Or(W)\times\Or(W_1)}$\д equivariant surjective map
\equ{
\pi_1\cln V'\oplus\br{S(W_1)/(\R E)}\oplus\Hom_{\F}(\F^k,W_1)\oplus\F\to V'\oplus V_1\oplus\F^2}
whose fibres coincide with the $\br{\SO(W_1)}$\д orbits. Since $\dim_{\F}(W_1\oplus\F^k)\bw=(n\bw-1)\bw+k\bw=n\bw=\dim_{\F}W$, we can identify $W$ and
$W_1\oplus\F^k$ as spaces of the class~$\Pc_{\F}$. The map
\eqac{
\wt{\psi}\cln\Hom_{\F}(W_1\oplus\F^k,W)\to V'\oplus\br{S(W_1)/(\R E)}\oplus\Hom_{\F}(\F^k,W_1)\oplus\F,\\
Z\to\br{\psi(Z),\det_{\F}Z}=\br{\ph(Z^*Z+\R E),\det_{\F}Z}}
is $\br{\Or(W)\times\Or(W_1)}$\д equivariant. Also, by Corollary~\ref{sred}, the map~$\wt{\psi}$ is surjective, and its fibres coincide with the
$\br{\SO(W)}$\д orbits. Hence, the map $\pi_1\circ\wt{\psi}\cln\Hom_{\F}(W_1\oplus\F^k,W)\bw\to V'\bw\oplus V_1\bw\oplus\F^2$ is
$\br{\Or(W)\times\Or(W_1)}$\д equivariant and surjective, and its fibres coincide with the $\br{\SO(W)\times\SO(W_1)}$\д orbits. If we consider the
representation
\eqac{
\Or(W)\times\Or(W_1)\cln V'\oplus V_1\oplus\R\oplus\F,\,(C,C_1)\cln(v',v_1,t,\la)\to(v',v_1,t,\det_{\F}C\cdot\la),}
then the map $\ga\cln V'\oplus V_1\oplus\F^2\to V'\oplus V_1\oplus\R\oplus\F,\,(v',v_1,\la,\mu)\to\br{v',v_1,|\la|^2-|\mu|^2,\la\mu}$ is
$\br{\Or(W)\times\Or(W_1)}$\д equivariant. Denote by~$\T$ the multiplicative group $\bc{c\in\F\cln|c|=1}$. We have $n>2-k=1$, $\dim_{\F}W_1\bw=n\bw-1>0$.
Consequently,
\eqn{\label{sot}
\begin{array}{c}
\bc{\det_{\F}C_1\cln C_1\in\Or(W_1)}=\T.
\end{array}}
One can easily see that the map $\F^2\to\R\oplus\F,\,(\la,\mu)\to\br{|\la|^2-|\mu|^2,\la\mu}$ is surjective, and its fibres coincide with the orbits of
the representation $\T\cln\F^2,\,c\cln(\la,\mu)\to(c\la,c^{-1}\mu)$. By~\eqref{sot}, the map~$\ga$ is surjective, and its fibres coincide with the
$\br{\Or(W_1)}$\д orbits. It follows from above that the map
$\ga\circ\pi_1\circ\wt{\psi}\cln\Hom_{\F}(W_1\oplus\F^k,W)\bw\to V'\oplus V_1\oplus\R\oplus\F$ is $\br{\Or(W)\times\Or(W_1)}$\д equivariant and
surjective, and its fibres coincide with the $\br{\SO(W)\times\Or(W_1)}$\д orbits.

Recall that the map~$\pi_0$ is $\br{\Or(W)\times\Or(W_1)}$\д equivariant and surjective, and its fibres coincide with the $\br{\Or(W_1)}$\д orbits.
Therefore, there exists an $\br{\Or(W)}$\д equivariant surjective map $\pi\cln\br{S(W)/(\R E)}\oplus\Hom_{\F}(\F^k,W)\to V'\oplus V_1\oplus\R\oplus\F$
satisfying $\pi\circ\pi_0\eqi\ga\circ\pi_1\circ\wt{\psi}$ whose fibres coincide with the $\br{\SO(W)}$\д orbits.
Since the maps $\ga,\pi_1,\wt{\psi},\pi_0$ are semialgebraic, continuous, and surjective, so is the map~$\pi$.

This completely proves the claim in the case $k=1$.

\begin{cas} $k=2$.
\end{cas}

Since $\dim_{\F}W_1\bw=n\bw-1$, there exist a~real vector space~$V_1$ and a~semialgebraic continuous surjective map
$\pi_1\cln V'\oplus\br{S(W_1)/(\R E)}\oplus\Hom_{\F}(\F^k,W_1)\to V'\oplus V_1$ whose fibres coincide with the $\br{\Or(W_1)}$\д orbits. Further,
$\dim_{\F}(W_1\oplus\F^k)\bw=(n\bw-1)\bw+k\bw=n\bw+1\bw=\dim_{\F}W\bw+1$, and, by Corollary~\ref{sre}, the map~$\psi$ is surjective, and its fibres
coincide with the $\br{\Or(W)}$\д orbits. Since the map~$\psi$ is $\br{\Or(W)\times\Or(W_1)}$\д equivariant, the map
$\pi_1\circ\psi\cln\Hom_{\F}(W_1\oplus\F^k,W)\to V'\oplus V_1$ is surjective, and its fibres coincide with the $\br{\Or(W)\times\Or(W_1)}$\д orbits.
Recall that the map~$\pi_0$ is $\br{\Or(W)\times\Or(W_1)}$\д equivariant and surjective, and its fibres coincide with the $\br{\Or(W_1)}$\д orbits.
Hence, there exists a~surjective map $\pi\cln\br{S(W)/(\R E)}\oplus\Hom_{\F}(\F^k,W)\to V'\oplus V_1$ satisfying $\pi\circ\pi_0\eqi\pi_1\circ\psi$ whose
fibres coincide with the $\br{\Or(W)}$\д orbits. Since the maps $\pi_1,\psi,\pi_0$ are semialgebraic, continuous, and surjective, so is the map~$\pi$.

This completely proves the claim in the case $k=2$.

Thus, we have completely proved Theorem~\ref{main} (using mathematical induction on $\dim_{\F}W$ separately for each number $k\in\{1,2\}$).

\newpage


\begin{thebibliography}{9}
\bibitem{MAMich}
M.\,A.\?Mikhailova, \textit{On the quotient space modulo the action of a finite group generated by pseudoreflections}, Mathematics of the USSR-Izvestiya,
1985, vol.\,24, \No1, 99---119.
\bibitem{My1}
O.\,G.\?Styrt, \textit{On the orbit space of a compact linear Lie group with commutative connected component}, Tr. Mosk. Mat. O-va, 2009, vol.\,70,
235---287 (Russian).
\bibitem{My2}
O.\,G.\?Styrt, \textit{On the orbit space of a three-dimensional compact linear Lie group}, Izv. RAN, Ser. math., 2011, vol.\,75, \No4, 165---188
(Russian).
\bibitem{My3}
O.\,G.\?Styrt, \textit{On the orbit space of an irreducible representation of a special unitary group}, Tr. Mosk. Mat. O-va, 2013, vol.\,74, \No1,
175---199 (Russian).
\bibitem{My4}
O.\,G.\?Styrt, \textit{On the orbit spaces of irreducible representations of simple compact Lie groups of types $B$, $C$, and~$D$}, J.~Algebra, 2014,
vol.\,415, 137---161.
\end{thebibliography}
\end{document}